\DeclareMathAlphabet{\pazocal}{OMS}{zplm}{m}{n}
\theoremstyle{plain}  
\newtheorem{theorem}{Theorem}[section]
\newtheorem*{theorem*}{Theorem}
\newtheorem{lemma}[theorem]{Lemma}
\newtheorem{proposition}[theorem]{Proposition}
\theoremstyle{definition}
\newtheorem{remark}[theorem]{Remark}
\newtheorem*{claim*}{Claim}
\numberwithin{equation}{section}
\newcommand{\R}{\mathbb{R}}
\newcommand{\Z}{\mathbb{Z}}
\newcommand{\C}{\mathbb{C}}
\newcommand{\g}{\mathfrak{g}}
\newcommand{\h}{\mathfrak{h}}
\newcommand{\z}{\mathfrak{z}}
\newcommand{\Aut}{\mathrm{Aut}}
\newcommand{\spann}{\mathrm{span}}
\newcommand{\be}{\begin{equation}}
	\newcommand{\ee}{\end{equation}}
\newcommand{\ben}{\begin{enumerate}}
	\newcommand{\een}{\end{enumerate}}
\newcommand{\bit}{\begin{itemize}}
	\newcommand{\eit}{\end{itemize}}
\newcommand{\edoc}{\end{document}}
\def\br#1\er{{#1}} 
\def\bw#1\ew{\textcolor{brown}{#1}} 
\def\bb#1\eb{\textcolor{blue}{#1}} 
\def\br#1\er{\textcolor{red}{#1}} 
\def\bm#1\em{\textcolor{magenta}{#1}}
\def\bv#1\ev{\textcolor{olive}{#1}}
\renewcommand{\tocsection}[3]{%
	\indentlabel{\@ifnotempty{#2}{\ignorespaces#1 #2\quad}}#3}
\renewcommand{\tocsubsection}[3]{%
	\indentlabel{\@ifnotempty{#2}{\ignorespaces#1 #2\quad}}#3}
\newcommand\@dotsep{4.5}
\def\@tocline#1#2#3#4#5#6#7{\relax
	\ifnum #1>\c@tocdepth 
	\else
	\par \addpenalty\@secpenalty\addvspace{#2}%
	\begingroup \hyphenpenalty\@M
	\@ifempty{#4}{%
		\@tempdima\csname r@tocindent\number#1\endcsname\relax
	}{%
		\@tempdima#4\relax
	}%
	\parindent\z@ \leftskip#3\relax \advance\leftskip\@tempdima\relax
	\rightskip\@pnumwidth plus1em \parfillskip-\@pnumwidth
	#5\leavevmode\hskip-\@tempdima{#6}\nobreak
	\leaders\hbox{$\m@th\mkern \@dotsep mu\hbox{.}\mkern \@dotsep mu$}\hfill
	\nobreak
	\hbox to\@pnumwidth{\@tocpagenum{\ifnum#1=1\fi#7}}\par
	\nobreak
	\endgroup
	\fi}
\renewcommand\csname r@tocindent0\endcsname{0pt}
\def\l@subsection{\@tocline{2}{0pt}{2.5pc}{5pc}{}}
\begin{document}
\title[The completeness problem on 3-dimensional non-unimodular Lie groups]{The completeness problem on 3-dimensional non-unimodular Lie
	groups}


\author[S. Chaib]{Salah Chaib}
\address{\hspace{-5mm} Salah Chaib, Centro de Matem\'{a}tica,
	Universidade do Minho,
	Campus de Gualtar,
	4710-057 Braga,
	Portugal} 
\email {salah.chaib@cmat.uminho.pt}
\author[A.C. Ferreira]{Ana Cristina Ferreira}
\address{\hspace{-5mm} Ana Cristina Ferreira, Centro de Matem\'{a}tica,
	Universidade do Minho,
	Campus de Gualtar,
	4710-057 Braga,
	Portugal} 
\email {anaferreira@math.uminho.pt}

\subjclass[2020]{Primary 53C22; Secondary 53C30, 53C50.}

\date{\today}

\bigskip

\begin{abstract}  
We consider the completeness problem for left-invariant Lorentzian metrics on 3-dimensional non-unimodular Lie groups, all of which have Lie algebra of the form $\mathbb{R} \ltimes_A \mathbb{R}^2$, where $A$ is a real $2 \times 2$ matrix with nonzero trace. The case where $A$ is not diagonalizable over $\mathbb{C}$ was addressed in previous work by the authors, and the limiting case where $A$ is a scalar multiple of the identity is also known from the literature. In this paper, we determine all geodesically (in)complete left-invariant Lorentzian metrics for all other cases where $A$ is diagonalizable over $\mathbb{R}$. Additionally, we show that, when $A$ is diagonalizable over $\mathbb{C}$ but not over $\mathbb{R}$, there exists at least one incomplete metric. As a consequence of prior work and our results, we obtain that every 3-dimensional non-unimodular Lie group admits an incomplete left-invariant Lorentzian metric.
\end{abstract}

\vspace*{-3mm}

\maketitle

\tableofcontents


\bigskip

\section{Introduction}

In this article, we address the following question: {\it Given a Lie group $G$ of dimension $3$, which of its left-invariant Lorentzian metrics are geodesically complete?}
Partial results about this question were previously obtained in the literature.  In \cite{BrombergMedina}, the completeness problem was solved for all 3-dimensional unimodular Lie groups. Hence, since any Lie group of dimension $3$ is either simple or solvable, and any simple Lie group is unimodular, we focus on 3-dimensional solvable (non-unimodular) Lie groups.

Any simply connected solvable Lie group $G$ is given by a semidirect product $\R\ltimes_A \R^2$, where $A$ is a $2\times 2$ real matrix and $\R$ acts on $\R^2$ via $t \mapsto \mathrm{exp}\, tA$. The isomorphism class of the Lie algebra $\g$ of $G$ is  thus determined by the Jordan normal form of $A$. We therefore have the following types of solvable Lie algebras, cf. \cite{Bianchi}, (i) $A$ is not diagonalizable over $\C$, (ii) $A$ is diagonalizable with real eigenvalues or (iii) $A$ is diagonalizable over $\C$ with complex non-real eigenvalues.      

There is, up to isomorphism, only one non-unimodular Lie algebra of type (i), the Lie algebra which we denote by $\mathfrak{psh}$. This case was considered in the first part of our study \cite{CFZ-partI}.  More precisely, we solved the completeness problem for the pseudo-homothetic Lie group, the simply connected Lie group whose Lie algebra is $\mathfrak{psh}$. In this manuscript, we investigate the case where $A$ is diagonalizable with real eigenvalues. Up to normalization with respect to the eigenvalue of highest absolute value, $A$ is similar to $\mathrm{diag}(1, \lambda)$, with $|\lambda| \leq 1$. We thus obtain a 1-parameter family of isomorphism classes of Lie algebras $\h(\lambda)$, $|\lambda|\leq 1$, such that the non-trivial bracket relations on $\h(\lambda)$ are given by
 \begin{equation}\label{eq:standbasis}
     [e_1, e_2] = e_2, \quad [e_1, e_3] = \lambda e_3.
 \end{equation}
Any basis $B=\{e_1, e_2,e_3\}$ satisfying \eqref{eq:standbasis} will be referred to as a standard basis of $\h(\lambda)$. 

The only unimodular Lie algebra in this family is obtained for the limiting case $\lambda = -1$ and corresponds to 
 the Lie algebra of the Poincar\'e
 group (also called $\mathrm{Sol}$), the group of isometries of the 2-dimensional Minkowski space.   Another limiting case occurs when $\lambda =1$ for the 3-dimensional homothetic group $\mathrm{Ho}$ given by $\R\ltimes_{\mathrm{Id}} \R^2$. It was proven in \cite{Guediri-solvable} that all left-invariant Lorentzian metrics on homothetic groups $\R\ltimes_{\mathrm{Id}} \R^{n}$ of any dimension are incomplete.  Moreover,  the  rather remarkable fact that {\it all} indefinite left-invariant metrics on $\R\ltimes_\mathrm{Id} \R^n$ are incomplete was later extended in \cite{VukmirovicSukilovic}.

Investigation of geodesic completeness of Lorentzian metrics on the remaining simply connected Lie groups with Lie algebra $\h(\lambda)$, $|\lambda|<1$, which includes the product $\mathrm{Aff}^+(\R)\times \R$ for $\lambda =0$, is the object of this part of our program. 

A classical and fundamental fact is that the study of the geodesic flow of a left-invariant metric on a Lie group $G$ can be reduced to the study of the flow of a certain vector field on $\g$, which we shall call the geodesic field, via the Euler-Arnold formalism \cite{Arnold-paper}. 
We present in Sec. \ref{sec:preliminaries} a brief account of background material and techniques on this topic.  
With this formalism in mind and for ease of exposition, we will speak of the geodesic completeness of a metric Lie algebra $(\g, q)$ to mean the geodesic completeness of $(G,g)$, where $G$ is a Lie group with Lie algebra $\g$ and $g$ is the left-invariant metric defined by $q$.

   In Sec. \ref{Sec:N-F}, using the fact that all Lie algebras $\h(\lambda)$, with $|\lambda|<1$, have the same automorphism group, we derive their metric normal forms, and present the list of their associated geodesic fields in Sec. \ref{Sec:geod-vf}. The special case $\lambda =0$, which corresponds to the Lie algebra $\mathfrak{aff}(\R)\oplus \R$, is considered in Sec. \ref{sec:(in)co_aff}, where we prove the following result.

\begin{theorem}\label{thm:aff}
Let $\mathfrak{aff}(\R)\oplus \R$ be equipped with a Lorentzian metric $q$, and $\z$ and $\mathfrak{d}$ be the center and the derived subalgebra, respectively. Then
  \begin{itemize}
  \item[(a)] if $\z$ is spacelike, q is incomplete with idempotents;
  \item[(b)] if $\z$ is timelike, q is complete and all integral curves of the geodesic field are bounded;
  \item[(c)] if $\z$ is lightlike,
  \begin{itemize}
      \item[(c.1)] and $\z$ and $\mathfrak{d}$ are not orthogonal, $q$ is incomplete with no idempotents;
      \item[(c.2)] and $\z$ and $\mathfrak{d}$ are orthogonal, q is complete and all non-stationary integral curves of the geodesic field are unbounded.
  \end{itemize}
  \end{itemize}
\end{theorem}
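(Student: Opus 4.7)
The plan is to specialize the metric normal forms from Section~\ref{Sec:N-F} and the associated geodesic vector fields from Section~\ref{Sec:geod-vf} to $\lambda=0$. Since $e_3$ spans the center $\z$ and $e_2$ spans the derived subalgebra $\mathfrak{d}$, the causal character of $\z$ and its orthogonality to $\mathfrak{d}$ can be read off a few entries of $q$ in the normal form. As a first step, I would partition the (short) list of normal forms for $\mathfrak{aff}(\R)\oplus\R$ into the four subclasses (a), (b), (c.1), (c.2) corresponding to the branches of the statement; the remainder of the proof then splits into one analysis per branch.

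For case (a), I would exhibit an idempotent of the Euler--Arnold product, i.e.\ a nonzero $x\in\g$ with $\Gamma(x)=x$, where $\Gamma$ denotes the quadratic geodesic field. The integral curve through $x$ is then $\gamma(t)=x/(1-t)$, which blows up at $t=1$, giving incompleteness. Solving $\Gamma(x)=x$ in the spacelike normal form reduces to a small polynomial system in three unknowns, solvable by direct substitution.

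For the complete cases (b) and (c.2), the plan is to produce two functionally independent first integrals of $\Gamma$ and to reduce the flow to a scalar ODE on each common level set. The energy $q(\cdot,\cdot)$ is preserved, and centrality of $e_3$ makes the third momentum coordinate a Casimir, hence conserved as well. In case (b), the timelike condition on $\z$ forces the common level sets to be compact, yielding completeness together with boundedness of integral curves. In case (c.2), the orthogonality $q(e_2,e_3)=0$ should collapse the scalar ODE into a linear one, giving completeness, while the unboundedness of level sets then yields unbounded non-stationary trajectories. For case (c.1), I would first verify that $\Gamma(x)=x$ admits no nontrivial real solutions in the corresponding normal form (no idempotents), and then use the same two first integrals to reduce the dynamics to a scalar ODE in which the nonzero coupling $q(e_2,e_3)\neq 0$ produces a Riccati-type term driving finite-time blowup.

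The main obstacle is the dichotomy between (c.1) and (c.2). In the lightlike regime the restriction of $q$ to $\z\oplus\mathfrak{d}$ is degenerate, so no positive-definite first integral is available and completeness cannot be obtained from compactness alone. The argument must instead hinge on identifying precisely the monomial in the reduced scalar ODE whose coefficient is $q(e_2,e_3)$: when it vanishes the equation is linear and solutions extend globally, but otherwise it forces a singularity in finite time. Making this contrast rigorous, and checking that no other mechanism intervenes in either direction, is the delicate part of the argument.
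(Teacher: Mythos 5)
Your proposal matches the paper's argument in all essentials: the same partition of the $\lambda=0$ normal forms by the causal type of $\z$ and its orthogonality to $\mathfrak{d}$, idempotents and the blow-up curve $x/(1-t)$ for case (a), the energy together with the conserved momentum $q(e_3,\cdot)$ (coming from centrality of $e_3$) combined into a positive-definite quadratic integral forcing compact level sets in case (b), explicit integration of the linear system for $\mathcal{Q}_{12}$ in case (c.2), and restriction to a suitable (zero) level set of a combination of the two integrals to obtain a Riccati equation $\dot{x}=\tfrac{1}{2}x^2$ in case (c.1). The approach is essentially identical to the paper's proof in Section~\ref{sec:(in)co_aff}.
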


We proceed, in Sec. \ref{sec:(in)co_h_lambda}, with the study of geodesic completeness on the Lie algebras $\h(\lambda)$ for $0<|\lambda|<1$. Taking $B=\{e_1, e_2, e_3\}$ to be a standard basis of $\h(\lambda)$, we establish the following characterization. 

\begin{theorem}\label{thm:h(lambda)}
   Let $\h(\lambda)$, $0<|\lambda|<1$, be equipped with a Lorentzian metric $q$. Then, $q$ is geodesically complete if and only if $e_3$ is timelike and $e_2$ is not spacelike. Furthermore, if $e_3$ is timelike and
   \begin{itemize}
       \item[(a)] $e_2$ is timelike, all integral curves of the corresponding geodesic field are bounded; 
      \item[(b)]  $e_2$ is lightlike, unbounded curves of the corresponding geodesic field exist. 
      \end{itemize}
 \end{theorem}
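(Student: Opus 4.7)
The plan is to reduce to the metric normal forms derived in Section \ref{Sec:N-F} and the explicit geodesic fields tabulated in Section \ref{Sec:geod-vf}, and then argue case by case according to the causal character of $e_2$ and $e_3$, which is what parametrizes the list of normal forms for this Lie algebra. In each case one is left with an explicit quadratic vector field $X_q$ on $\h(\lambda) \cong \R^3$, and the task is either to exhibit a curve blowing up in finite time or to show global existence.

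For the incompleteness direction --- either $e_3$ is non-timelike, or $e_3$ is timelike and $e_2$ is spacelike --- the strategy is to produce an \emph{idempotent} of $X_q$, i.e.\ a nonzero $v \in \h(\lambda)$ with $X_q(v) = v$. Because $X_q$ is homogeneous of degree two, any such $v$ yields the integral curve $\gamma(t) = (1-t)^{-1} v$, defined only on $(-\infty,1)$, and hence witnesses geodesic incompleteness. In each case, finding $v$ reduces to a quadratic system in three real unknowns whose solvability is guaranteed by the hypothesis $0<|\lambda|<1$; this is also where the argument genuinely differs from the boundary cases $|\lambda|\in\{0,1\}$ treated separately in Sec.~\ref{sec:(in)co_aff} and in the references recalled in the introduction.

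For the completeness direction ($e_3$ timelike and $e_2$ non-spacelike), I would work directly with the explicit form of $X_q$. The commutator relations \eqref{eq:standbasis} force the flow to preserve certain multiplicative quantities built from the coordinates along $e_2$ and $e_3$ (essentially because $e_2,e_3$ are eigenvectors of $\ad_{e_1}$); combining these with the always-conserved $q(x,x)$, whose restriction to $\spann(e_2,e_3)$ is negative-definite when $e_2$ is timelike and negative-semidefinite when $e_2$ is lightlike, yields an a priori bound on the $e_1$-component of any trajectory, and this is enough to rule out finite-time blow-up. In subcase~(a) a refined version of the same estimates sharpens this to boundedness of every integral curve; in subcase~(b) I would produce an unbounded integral curve explicitly by taking initial data aligned with the null line and integrating the reduced ODE.

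The principal obstacle is the boundedness assertion in subcase~(a): since two timelike vectors in a Lorentzian space cannot be orthogonal, the metric necessarily has off-diagonal coupling between $e_2$ and $e_3$, so the Euler--Arnold system does not decouple neatly in the standard-basis coordinates, and isolating the correct combination of first integrals to upgrade ``no blow-up'' to ``globally bounded trajectories'' is the most delicate step. The incompleteness half, by contrast, is essentially mechanical once the right idempotent has been guessed in each normal form.
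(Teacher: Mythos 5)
Your incompleteness strategy has a concrete failure: you assert that in every incomplete case an idempotent exists and that its existence ``is guaranteed by the hypothesis $0<|\lambda|<1$''. This is false for two of the normal forms, namely $\mathcal{Q}_9$ (both $e_2$ and $e_3$ lightlike) and $\mathcal{Q}_{5,0}$ ($e_2$ timelike, $e_3$ lightlike): for $\mathcal{Q}_9$ the equations $\dot y=\lambda xy$, $\dot z=xz$ force an idempotent to satisfy simultaneously $x=1/\lambda$ and $x=1$ unless a coordinate vanishes, in which case the first equation forces $x=0$; so no idempotent exists for $0<|\lambda|<1$, and similarly for $\mathcal{Q}_{5,0}$. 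These two cases genuinely require a different mechanism. The paper restricts the flow to the zero level set of the energy, obtaining an equation of the form $\dot x=\tfrac{\lambda+1}{2}x^2+\tfrac{1-\lambda}{2}y^2$ (resp.\ $\dot x=\tfrac{\lambda+1}{2}x^2$), and invokes Lemma \ref{lem:incompleteness}; without something of this kind your argument does not cover the full incompleteness direction.

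The completeness direction also rests on a false premise. You claim that the restriction of $q$ to $\spann\{e_2,e_3\}$ is negative-definite when $e_2$ is timelike and negative-semidefinite when $e_2$ is lightlike, and that this plus conservation of energy bounds the $e_1$-component. In fact, in both relevant normal forms ($\mathcal{Q}_{5,s}$ with $-1<s<0$, and $\mathcal{Q}_8$) the restriction to the ideal $\mathfrak{d}=\spann\{e_2,e_3\}$ has negative determinant ($-s-1<0$, resp.\ $-1$) and is therefore \emph{indefinite}; a Lorentzian form of signature $(+,+,-)$ with $e_1$ spacelike cannot restrict negative-definitely to a $2$-plane. Consequently the conserved energy alone gives no a priori bound on any coordinate. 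Your instinct about multiplicative conserved quantities coming from $e_2,e_3$ being eigenvectors of $\ad_{e_1}$ is the right one --- after the change of variables $\xi=sz+y$, $\eta=z-y$ one gets $\dot\xi=\lambda x\xi$, $\dot\eta=x\eta$, hence the non-polynomial first integral $\xi/\eta^{\lambda}$ on each half-space $\{\pm\eta>0\}$ --- but the substantive step, which your sketch does not supply, is proving that the joint level sets of this invariant and the energy are bounded; this uses the sign conditions $\lambda-1<0$ and $-s/(s+1)>0$ in an essential way, and must be supplemented by a separate treatment of the invariant plane $\{\eta=0\}$ (where, for $\mathcal{Q}_8$, one finds the unbounded complete curves of subcase (b)).
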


 In Sec. \ref{sec:limiting}, we 
  study the limiting Lie algebras $\h(\lambda)$, $\lambda = \pm 1$. We recover the result of \cite{Guediri-solvable} for $\h(1)$ and clarify a misstatement in \cite[Prop. 3]{BrombergMedina} for $\h(-1)$. 

We finish this paper with Sec. \ref{sec:3D-non-uni}, by briefly considering the remaining family of non-unimodular Lie algebras of case (iii), in order to prove the following relevant result.

\begin{theorem}\label{Thm:Intro-Ic-3D}
    Every non-unimodular Lie algebra of dimension 3 admits an incomplete Lorentzian metric.
\end{theorem}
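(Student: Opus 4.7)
The plan is to deduce Theorem \ref{Thm:Intro-Ic-3D} by combining the results established throughout the paper with prior literature, according to the three cases for the matrix $A$ in the semidirect product presentation $\R \ltimes_A \R^2$.

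For case (i), where $A$ is not diagonalizable over $\C$, the Lie algebra is $\mathfrak{psh}$, and the existence of incomplete left-invariant Lorentzian metrics is already established in our previous work \cite{CFZ-partI}. For case (ii), where $A$ is diagonalizable over $\R$, the relevant non-unimodular Lie algebras are $\h(\lambda)$ with $\lambda \in (-1, 1]$ (the value $\lambda = -1$ corresponds to the unimodular algebra $\mathrm{Sol}$ and is excluded). For $\lambda = 0$, incomplete metrics are produced by Theorem \ref{thm:aff}(a) or (c.1); for $0 < |\lambda| < 1$, Theorem \ref{thm:h(lambda)} immediately furnishes incomplete metrics (for instance, any metric making $e_2$ spacelike); and for $\lambda = 1$, the result of \cite{Guediri-solvable} ensures that \emph{every} left-invariant Lorentzian metric is incomplete.

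The remaining, and genuinely new, content is case (iii). I would fix a convenient standard basis $\{e_1, e_2, e_3\}$ for a representative Lie algebra with complex non-real eigenvalues of $A$ (normalizing so that the nonzero trace becomes $2$ and the imaginary part of the eigenvalues is a single parameter $\beta > 0$) and construct a single explicit incomplete Lorentzian metric $q$. The most economical route is the idempotent criterion: any nonzero $v$ with $Y(v) = v$, where $Y$ is the Euler-Arnold geodesic field of $q$ recalled in Sec. \ref{sec:preliminaries}, yields the geodesic $x(t) = v/(1 - t)$, which blows up at $t = 1$ and therefore witnesses incompleteness. Concretely, I would take a short parametric family of metrics (for example, diagonal ones in $\{e_1, e_2, e_3\}$ with the three causal characters varied), compute $Y$ explicitly, and reduce the equation $Y(v) = v$ to a polynomial system in the coordinates of $v$.

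The main obstacle will be ensuring that this polynomial system admits a \emph{real} solution for some admissible choice of parameters: the rotational term introduced by the complex eigenvalues couples the components of $Y$ along the derived subalgebra in a way that may obstruct naive idempotents. Some tuning of the metric coefficients (or, if needed, an off-diagonal perturbation pairing $e_1$ with a direction in the derived subalgebra) should be enough to produce a real idempotent for each $\beta > 0$. In the worst case one abandons the idempotent route and instead integrates a specific orbit of $Y$, showing finite-time blow-up directly; this is a standard fallback and should not require any essentially new technique beyond those already developed in Sections \ref{sec:(in)co_aff} and \ref{sec:(in)co_h_lambda}.
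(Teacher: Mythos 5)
Your reduction to the three Jordan-type cases, and your handling of cases (i) and (ii), match the paper exactly: $\mathfrak{psh}$ from \cite{CFZ-partI}, $\h(0)$ from Theorem \ref{thm:aff}, $\h(\lambda)$ with $0<|\lambda|<1$ from Theorem \ref{thm:h(lambda)}, and $\h(1)$ from \cite{Guediri-solvable}. The gap is in case (iii), which is the only genuinely new content, and there your primary route demonstrably fails. For the diagonal metric $q=\mathrm{diag}(-1,1,1)$ on $\mathfrak{e}(\mu)$ the geodesic field is \eqref{eq:e-mu-b-1}, and the idempotent equations in the last two coordinates read
\begin{equation*}
x\begin{pmatrix}\mu & 1\\ -1 & \mu\end{pmatrix}\begin{pmatrix}y\\ z\end{pmatrix}=\begin{pmatrix}y\\ z\end{pmatrix},
\end{equation*}
so a nonzero $(y,z)$ would have to be a real eigenvector of a matrix whose eigenvalues are $\mu\pm i$; none exists, and $(y,z)=(0,0)$ forces $x=0$. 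So this metric has no idempotents at all (consistent with the paper's remark that \eqref{eq:e-mu-b-1} has no invariant planes). You anticipate this obstruction and propose either tuning the metric to recover an idempotent or falling back on direct finite-time blow-up, but you carry out neither, and you offer no evidence that any Lorentzian metric on $\mathfrak{e}(\mu)$ admits a real idempotent. As it stands, the decisive step of the theorem is unproved.

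The fix is short and is exactly the paper's argument (Proposition \ref{prop:e-mu}): keep $q=\mathrm{diag}(-1,1,1)$ and restrict the flow to the zero level set of the conserved energy $e(x,y,z)=-x^2+y^2+z^2$. There $y^2+z^2=x^2$, so the first equation of \eqref{eq:e-mu-b-1} becomes $\dot{x}=\mu x^2$ along every null integral curve; starting at, say, $(1,1,0)$, which is null, gives finite-time blow-up. This is the same zero-level-set technique you already had available from Sections \ref{sec:(in)co_aff} and \ref{sec:(in)co_h_lambda} (the treatment of $\mathcal{Q}_{4,0}$, $\mathcal{Q}_{5,0}$ and $\mathcal{Q}_9$), so your "worst case" fallback is in fact the intended, and essentially one-line, proof; it should be promoted from a contingency to the actual argument.
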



\section{Preliminaries}\label{sec:preliminaries}

In this section, we briefly recall the basic framework used throughout the text, fixing notation and terminology. An extended exposition can be found in \cite[Sec. 2]{CFZ-partI} and references therein. 

Let us fix a Lie group $G$ with Lie algebra $\g =T_eG$. Left-invariant metrics on $G$ are in one-to-one correspondence with non-degenerate symmetric bilinear forms on $\g$.  The Euler-Arnold theorem \cite{Arnold-paper} establishes that the geodesic flow of $(G,g)$ is uniquely associated with the flow of a vector field on $(\g, q=g_e)$ as follows.  If $\gamma$ is a geodesic with $\gamma(0)=e$, its velocity $\dot{\gamma}$ can be identified with a curve $v$ in $\mathfrak g$ via left translation. The geodesic equation then reduces to
\begin{equation}\label{eq:euler-arnold}
	\dot v(t) = \operatorname{ad}^\dagger_{v(t)} v(t),
\end{equation}
where $\operatorname{ad}^\dagger$ denotes the $q$-transpose of the adjoint map $\operatorname{ad}$. For simplicity of language, we will refer to the vector field defined by the system of ODEs \ref{eq:euler-arnold} as the geodesic field.  Clearly, a left-invariant metric on $G$ is geodesically complete if and only if its associated geodesic field on $\g$ is complete. 

 The geodesic field admits a natural first integral: the energy associated with $q$ is conserved along trajectories.  Further first integrals (eventually non-polynomial) may occur, providing effective constraints on the flow. For instance, if a positive-definite quadratic first integral exists then the flow is complete. On the other hand, a very useful tool for proving incompleteness is given by idempotents: an element $v_o \in \mathfrak g$ is called an idempotent if the geodesic field at $v_o$ is equal to $v_o$. The existence of an idempotent leads to an integral curve that blows up in finite time. However, not all incomplete trajectories arise from idempotents. The following lemma, \cite[Lemma 2.3]{CFZ-partI}, will be instrumental in providing such curves.

\begin{lemma}\label{lem:incompleteness}
	Let $(E)$ be an ordinary differential equation of the form
	$$
	\dot{x}(t)=ax^2(t)+\alpha(t),
	$$
	such that $a>0$ and $\alpha\in\mathcal{C}^\infty(\R)$; $t\mapsto\alpha(t)\geq0$. Let $\gamma\colon I\rightarrow\R$ be a nonzero maximal integral curve of $(E)$, then $\gamma$ must be incomplete.
\end{lemma}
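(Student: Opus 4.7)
The plan is to prove Lemma 2.3 by a Riccati-type comparison with the model ODE $\dot{y}=ay^{2}$, whose nontrivial solutions blow up in finite (forward or backward) time.

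First I would use the sign hypotheses to establish monotonicity. Since $a>0$ and $\alpha(t)\geq 0$, the right-hand side $a\gamma^{2}(t)+\alpha(t)$ is nonnegative, so $\dot{\gamma}(t)\geq 0$ on $I$ and hence $\gamma$ is nondecreasing on its maximal interval of definition. Because $\gamma$ is a nonzero solution and nondecreasing, there exists some $t_{\ast}\in I$ with $\gamma(t_{\ast})\neq 0$, and monotonicity then propagates the sign: $\gamma(t)\geq\gamma(t_{\ast})>0$ for every $t\geq t_{\ast}$ in $I$ in the positive case, or $\gamma(t)\leq \gamma(t_{\ast})<0$ for every $t\leq t_{\ast}$ in $I$ in the negative case.

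Next I would carry out the comparison. In the positive case, dropping $\alpha$ yields $\dot{\gamma}(t)\geq a\gamma(t)^{2}$ for $t\geq t_{\ast}$; since $\gamma>0$ there, dividing by $\gamma^{2}$ gives $\tfrac{d}{dt}\bigl(-1/\gamma(t)\bigr)\geq a$. Integrating from $t_{\ast}$ to $t$ yields
\[
\frac{1}{\gamma(t)}\;\leq\;\frac{1}{\gamma(t_{\ast})}-a(t-t_{\ast}).
\]
The right-hand side becomes nonpositive at $t=t_{\ast}+1/(a\gamma(t_{\ast}))$, while the left-hand side must remain strictly positive so long as $\gamma$ is defined; hence the maximal interval $I$ cannot extend past that finite value, and $\gamma$ is incomplete. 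In the negative case, the same inequality $\dot{\gamma}\geq a\gamma^{2}$ holds, and for $t\leq t_{\ast}$ one gets $\tfrac{d}{dt}(-1/\gamma(t))\geq a$ again, so integrating from $t$ to $t_{\ast}$ forces $-1/\gamma(t)$ to fall below $0$ by time $t_{\ast}-1/(a|\gamma(t_{\ast})|)$, contradicting $\gamma(t)<0$. Equivalently, one can set $y(t):=-\gamma(2t_{\ast}-t)$, check that $y$ satisfies an ODE of the same form with a new nonnegative inhomogeneity $\tilde{\alpha}(t):=\alpha(2t_{\ast}-t)$, and reduce to the previous case.

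The argument is short and does not hinge on any single difficult step, but the one point that requires care is to make sure the comparison is applied on an interval where the sign of $\gamma$ is controlled. That is precisely why I first establish that $\gamma$ is nondecreasing on $I$: it guarantees that once $\gamma(t_{\ast})$ is known to be positive (resp. negative), the bound $\gamma^{2}\geq \gamma(t_{\ast})^{2}>0$ persists on a one-sided neighborhood of $t_{\ast}$, which is what drives the blow-up estimate. No additional machinery (no Gronwall refinement, no use of idempotents) is needed: the monotonicity coming from $\alpha\geq 0$ plus the Riccati bound are enough to force a finite-time singularity in one time direction or the other.
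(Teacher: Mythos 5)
Your argument is correct. Note that the paper does not prove this lemma at all --- it is imported verbatim from Part~I of the authors' study (\cite[Lemma 2.3]{CFZ-partI}) --- so there is no in-paper proof to compare against; your Riccati comparison (monotonicity from $a\gamma^2+\alpha\geq 0$, sign propagation from a point $t_\ast$ with $\gamma(t_\ast)\neq 0$, then integration of $\tfrac{d}{dt}(-1/\gamma)\geq a$ forward or backward according to the sign) is the standard and complete way to establish it.
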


Finally, another powerful tool is the automorphism group of the Lie algebra $\g$, $\operatorname{Aut}(\g)$. The automorphism group acts naturally on $\mathrm{Sym}^\ast(\g)$, the space of non-degenerate symmetric bilinear forms on $\g$.  
Not  surprisingly, completeness of the flow of the geodesic field of $(\g, q)$ is invariant under this action. This was proved, for instance, in \cite{ElshafeiFerreiraSanchezZeghib}. Moreover, the geodesic flow remains unchanged under rescaling. Therefore, all bilinear forms in each orbit of $\mathrm{Sym}^\ast(\g)$ by the action of $\R^\ast\times\mathrm{Aut}(\g)$ are either complete or incomplete. Representatives of such orbits are usually called metric normal forms.

\bigskip

\section{Metric normal forms on $\h(\lambda), |\lambda|< 1$}\label{Sec:N-F}

The classification of normal forms of left-invariant Lorentzian metrics in dimension 3 has been considered more or less implicitly in several articles. A detailed account was presented in \cite{HaLee23}, see also Appendix A in \cite{CFZ-isometry}. We include some computations here, for 3-dimensional Lie algebras $\h(\lambda)$ where $|\lambda|< 1$. Consider a standard basis $B=\{e_1, e_2, e_3\}$ satisfying the bracket relations
$$[e_1,e_2]=e_2,\ [e_1,e_3]=\lambda e_3, \ [e_2,e_3]=0.$$

For every $\lambda$, with $|\lambda|<1$, the automorphism group $\mathrm{Aut}(\h(\lambda))$ can be obtained, by direct calculation using the definition and our preferred basis $B=\{e_1,e_2,e_3\}$, as being the matrix group
\begin{equation*}
 \mathrm{Aut}(\h(\lambda)) =   \left\{
        \begin{pmatrix}
            1 & 0 & 0 \\
            a & c & 0\\
            b & 0 & d
        \end{pmatrix}: \, a,b,c,d \in \mathbb{R}, c,d\neq 0
    \right\}.
\end{equation*}
As can be seen, $\mathrm{Aut}(\h(\lambda))$ is 4-dimensional and has four connected components, with $\mathrm{Aut}(\h(\lambda))/\mathrm{Aut}^0(\h(\lambda))$ isomorphic to the Klein group $\Z/2\Z\times\Z/2\Z$. Consider the generic $3\times 3$ matrix  
\begin{equation}
m= \begin{pmatrix}
    m_1 & m_2 & m_3 \\
    m_2 & m_4 & m_5 \\
    m_3 & m_5 & m_6
\end{pmatrix},
\end{equation}
which is assumed to represent a non-degenerate symmetric bilinear form with respect the basis $B$. The image of $m$ under the automorphism $$\varphi^{-1} = \begin{pmatrix}
    1 & 0 & 0 \\
    a & c & 0 \\
    b & 0 & d
\end{pmatrix}$$ is given by $\varphi^Tm\varphi$ where $T$ denotes the matrix transpose. We observe that the restriction of $m$ to the ideal $\mathfrak{d}=\spann\{e_2,e_3\}$ (the derived subalgebra when $\lambda\neq 0$) is transformed only by the subgroup $$\{\varphi \in \Aut(\h(\lambda)):\, a=b=0\}.$$ Moreover, the (non-)degeneracy on $\mathfrak{d}$ and whether $e_2$ or $e_3$ are isotropic or not are all preserved by $\Aut(\h(\lambda))$. We thus have two cases to consider, which will include subcases.

\smallskip

Case 1: $m|_\mathfrak{d}$ is non-degenerate i.e. $m_4m_6 -m_5^2 \neq 0$.

Subcase 1.1: $e_2$ is non-isotropic.

If $m_5=0$, we have 
$$
    \begin{array}{ll}
        \mathcal{Q}_{0} = \begin{pmatrix}
    1 & 0 & 0\\
    0 & 1 & 0 \\
    0 & 0 & 1
    \end{pmatrix}, &\mathcal{Q}_{1} = \begin{pmatrix}
    1 & 0 & 0\\
    0 & 1 & 0 \\
    0 & 0 & -1
    \end{pmatrix}, \medskip \\ \mathcal{Q}_{2} = \begin{pmatrix}
    1 & 0 & 0\\
    0 & -1 & 0 \\
    0 & 0 & 1
    \end{pmatrix}  \mbox{ and } &\mathcal{Q}_{3} = \begin{pmatrix}
    -1 & 0 & 0\\
    0 & 1 & 0 \\
    0 & 0 & 1
    \end{pmatrix}.
    \end{array}
$$

If $m_5 \neq 0$, we have 
$$
\begin{array}{lll}
     \mathcal{Q}_{4,r} = \begin{pmatrix}
    1 & 0 & 0\\
    0 & 1 & 1 \\
    0 & 1 & r
    \end{pmatrix}, &   \mathcal{Q}_{5,s} = \begin{pmatrix}
    1 & 0 & 0\\
    0 & -1 & 1 \\
    0 & 1 & s
    \end{pmatrix}, &\mathcal{Q}_{6,t} = \begin{pmatrix}
    -1 & 0 & 0\\
    0 & 1 & 1 \\
    0 & 1 & t
    \end{pmatrix},   
    \end{array}
$$
 with $r\neq1, s>-1, t>1$.

Subcase 1.2: $e_2$ is isotropic and $e_3$ is not isotropic i.e. $m_4 = 0$ and $m_6\neq0$.
\begin{center}
$\mathcal{Q}_{7} = \begin{pmatrix}
    1 & 0 & 0\\
    0 & 0 & 1 \\
    0 & 1 & 1
    \end{pmatrix}$ \qquad and \qquad  $\mathcal{Q}_{8} = \begin{pmatrix}
    1 & 0 & 0\\
    0 & 0 & 1 \\
    0 & 1 & -1
    \end{pmatrix}$.
\end{center}
Subcase 1.3: $e_2$ and $e_3$ are isotropic i.e. $m_4=m_6=0$.
\begin{center}
$\mathcal{Q}_{9} = \begin{pmatrix}
    1 & 0 & 0\\
    0 & 0 & 1 \\
    0 & 1 & 0
    \end{pmatrix}$.
\end{center}

\smallskip

Case 2: $m|_\mathfrak{d}$ is degenerate i.e. $m_4m_6 -m_5^2 = 0$.

Case 2.1: $e_2$ and $e_3$ are not isotropic i.e. $m_4,m_6\neq0$.

\begin{center}
$\mathcal{Q}_{10} = \begin{pmatrix}
    0 & 0 & 1\\
    0 & 1 & 1 \\
    1 & 1 & 1
    \end{pmatrix}$.
\end{center}

Case 2.2: $e_2$ is isotropic and $e_3$ is not isotropic i.e. $m_4=0$ and $m_6\neq0$.

\begin{center}
$\mathcal{Q}_{11} = \begin{pmatrix}
    0 & 1 & 0\\
    1 & 0 & 0 \\
    0 & 0 & 1
    \end{pmatrix}$.
\end{center}

Case 2.3: $e_2$ is not isotropic and $e_3$ is isotropic i.e. $m_4\neq0$ and $m_6=0$.

\begin{center}
$\mathcal{Q}_{12} = \begin{pmatrix}
    0 & 0 & 1\\
    0 & 1 & 0 \\
    1 & 0 & 0
    \end{pmatrix}$.
\end{center}

We remark that every Riemannian metric belongs to the orbit of $\mathcal{Q}_0$ or the orbit of $\mathcal{Q}_{4,r}$, for some $r>1$. 

\begin{remark}\label{Rem:limiting} Although not our main object of study, we include here the metric normal forms for $\h(\lambda), \lambda 
=\pm 1$, since they can be easily computed and will be useful in Sec. \ref{sec:limiting}.

The automorphism group of $\h(-1)$ is given by 
\begin{equation*}
 \mathrm{Aut}(\h(-1)) =   \left\{
        \begin{pmatrix}
            1 & 0 & 0 \\
            a & c & 0\\
            b & 0 & d
        \end{pmatrix},  \begin{pmatrix}
            -1 & 0 & 0 \\
            a & 0 & c\\
            b & d & 0
        \end{pmatrix}: \, a,b,c,d \in \mathbb{R}, c,d\neq 0
    \right\}.
\end{equation*}
Since $\mathrm{Aut}(\h(\lambda)), |\lambda|<1$, is a normal subgroup of $\mathrm{Aut}(\h(-1))$, then the metric normal forms of $\h(-1)$ are a subset of the previous list and can be deduced by checking which ones are in the same orbit under the action of  $\mathrm{Aut}(\h(-1))/\mathrm{Aut}(\h(\lambda))$. We obtain
\begin{equation*}
    \mathcal{Q}_0, \, \mathcal{Q}_1, \,  \mathcal{Q}_3, \, \mathcal{Q}_{4,r}, \, \mathcal{Q}_{5,s}, \, \mathcal{Q}_{6,t},\, \mathcal{Q}_9,\, \mathcal{Q}_{10}, \, \mathcal{Q}_{11},
\end{equation*}
with $r\neq 1$, $-1<s\leq 0$ and $t>1$.

The automorphism group of $\h(1)$ is given by 
\begin{equation*}
 \mathrm{Aut}(\h(1)) =   \left\{
        \begin{pmatrix}
            1 & 0 & 0 \\
            a & b & c\\
            d & e & f
        \end{pmatrix}: \, a,b,c,d, e,f \in \mathbb{R}, bf-ce\neq 0
    \right\}.
\end{equation*}
We see that $\mathrm{Aut}(\h(1))$ is 6-dimensional and this freedom quickly allows us to conclude that we only have four metric normal forms, namely 
\begin{equation*}
    \mathcal{Q}_0,\, \mathcal{Q}_1,\, \mathcal{Q}_3,\, \mathcal{Q}_{11}.
\end{equation*}
\end{remark}

\section{Geodesic vector fields on $\h(\lambda), |\lambda|<1$}\label{Sec:geod-vf}
For the Lie algebras $\h(\lambda)$, $|\lambda|<1$, and each of the metric normal forms of Lorentzian signature of Sec. \ref{Sec:N-F}, we exhibit its corresponding Euler-Arnold geodesic field and some extra properties: existence of invariant planes and idempotents. See Table \ref{table:geodesic-fields}.

\begin{table}[H]

\begin{adjustbox}{valign=t, minipage=\linewidth, scale=0.76, left}
\begin{tabular}{*5c} \toprule 
    & \quad bilinear form \quad  &  \quad geodesic field \quad & \quad invariant planes \quad & \quad  idempotents \quad \\ \toprule
 $\mathcal{Q}_{1}$  & $\begin{pmatrix}
    1 & 0 & 0\\
    0 & 1 & 0 \\
    0 & 0 & -1
    \end{pmatrix}$  & $\begin{cases}
	\dot{x}=-y^2+\lambda z^2\\
	\dot{y}=xy\\ 
	\dot{z}=\lambda xz
\end{cases}$ & \begin{tabular}{l}
     $y=0$ \\ \\ $z=0$      
\end{tabular} &  Yes, if $\lambda\neq0$  \\ \midrule 

 $\mathcal{Q}_{2}$  & $\begin{pmatrix}
    1 & 0 & 0\\
    0 & -1 & 0 \\
    0 & 0 & 1
    \end{pmatrix}$  & $\begin{cases}
	\dot{x}=y^2-\lambda z^2\\
	\dot{y}=xy\\ 
	\dot{z}=\lambda xz
\end{cases}$ & \begin{tabular}{l}
     $y=0$ \\ \\ $z=0$      
\end{tabular} & Yes \\ \midrule  

 $\mathcal{Q}_{3}$  & $\begin{pmatrix}
    -1 & 0 & 0\\
    0 & 1 & 0 \\
    0 & 0 & 1
    \end{pmatrix}$  & $\begin{cases}
	\dot{x}=y^2+\lambda z^2\\
	\dot{y}=xy\\ 
	\dot{z}=\lambda xz
\end{cases}$ & \begin{tabular}{l}
     $y=0$ \\  \\ $z=0$       
\end{tabular} & Yes\\ \midrule  

\begin{tabular}{c}
$\mathcal{Q}_{4,r}$ \\ {\small $r<1$} \end{tabular}  &  $\begin{pmatrix}
    1 & 0 & 0\\
    0 & 1 & 1 \\
    0 & 1 & r
    \end{pmatrix} $ & $\begin{cases}
	\dot{x}=-y^2-\lambda rz^2-(\lambda+1)yz\\
	\dot{y}=\frac{x}{r-1}((r-\lambda)y+r(1-\lambda)z)\\ 
	\dot{z}=\frac{x}{r-1}((\lambda-1)y+(r\lambda-1)z)
\end{cases}$ & \begin{tabular}{l}
     $y=-z$  \\  \\$y=-rz$      
\end{tabular} & \begin{tabular}{c} Yes, if \\  $0<r<1$  \\ or \\ $\lambda\neq 0$ and $r<1$ \end{tabular} \\ \midrule  

   \begin{tabular}{c}  $\mathcal{Q}_{5,s}$ \\ {\small $s>-1$} \end{tabular}  &  $\begin{pmatrix}
    1 & 0 & 0\\
    0 & -1 & 1 \\
    0 & 1 & s
    \end{pmatrix} $ & $\begin{cases}
	\dot{x}=y^2-\lambda sz^2-(\lambda+1)yz\\
	\dot{y}=\frac{x}{s+1}((s+\lambda)y+s(\lambda-1)z)\\ 
	\dot{z}=\frac{x}{s+1}((\lambda-1)y+(s\lambda+1)z)
\end{cases}$ & \begin{tabular}{l}
     $y=z$ \\ \\ $y=-sz$      
\end{tabular} & Yes, if $s>0$  \\ \midrule  

\begin{tabular}{c}  $\mathcal{Q}_{6,t}$ \\ {\small $t>1$} \end{tabular}  &  $\begin{pmatrix}
    -1 & 0 & 0\\
    0 & 1 & 1 \\
    0 & 1 & t
    \end{pmatrix} $ & $\begin{cases}
	\dot{x}=y^2+\lambda tz^2+(\lambda+1)yz\\
	\dot{y}=\frac{x}{t-1}((t-\lambda)y+t(1-\lambda)z)\\ 
	\dot{z}=\frac{x}{t-1}((\lambda-1)y+(t\lambda-1)z)
\end{cases}$ & \begin{tabular}{l}
     $y=-z$ \\ \\ $y=-tz$      
\end{tabular} & Yes  \\ \midrule 

 $\mathcal{Q}_{7}$  & $\begin{pmatrix}
    1 & 0 & 0\\
    0 & 0 & 1 \\
    0 & 1 & 1
    \end{pmatrix}$ & $\begin{cases}
	\dot{x}=-\lambda z^2-(\lambda+1)yz\\
	\dot{y}=\lambda xy+(\lambda-1)xz\\ 
	\dot{z}=xz
\end{cases}$ & \begin{tabular}{l}
     $y=-z$\\  \\ $z=0$      
\end{tabular} & Yes \\ \midrule  
 $\mathcal{Q}_{8}$  & $\begin{pmatrix}
    1 & 0 & 0\\
    0 & 0 & 1 \\
    0 & 1 & -1
    \end{pmatrix}$ &$\begin{cases}
	\dot{x}=\lambda z^2-(\lambda+1)yz\\
	\dot{y}=\lambda xy+(1-\lambda)xz\\ 
	\dot{z}=xz
\end{cases}$ & \begin{tabular}{l}
     $y=z$  \\ \\ $z=0$      
\end{tabular} & No \\ \midrule  
   
 $\mathcal{Q}_{9}$  &  $\begin{pmatrix}
     1 & 0 & 0 \\
     0 & 0 & 1 \\ 0 & 1 & 0 \end{pmatrix}$ & $\begin{cases}
	\dot{x}=-(\lambda+1)yz\\
	\dot{y}=\lambda xy\\ 
	\dot{z}=xz
\end{cases}$ & \begin{tabular}{l}
     $y=0$ \\ \\ $z=0$      
\end{tabular} & No \\ \midrule 

 $\mathcal{Q}_{10}$  &  $\begin{pmatrix}
     0 & 0 & 1 \\
     0 & 1 & 1 \\ 1 & 1 & 1 \end{pmatrix}$ & $\begin{cases}
	\dot{x}=\lambda x^2+(\lambda-1)(y+z)x\\
	\dot{y}=y^2+\lambda z^2+xy+(\lambda+1)(x+y)z\\ 
	\dot{z}=-y^2-\lambda z^2-(\lambda x+(1+\lambda)y)z
\end{cases}$ & \begin{tabular}{l}
     $x=0$ \\ \\ $y=-z$      
\end{tabular} & Yes \\ \midrule
 $\mathcal{Q}_{11}$  & $\begin{pmatrix}
    0 & 1 & 0\\
    1 & 0 & 0 \\
    0 & 0 & 1
    \end{pmatrix}$  & $\begin{cases}
	\dot{x}=x^2\\
	\dot{y}=-\lambda z^2 -xy\\ 
	\dot{z}=\lambda xz
\end{cases}$ & \begin{tabular}{l}
     $x=0$\\  \\ $z=0$      
\end{tabular} & Yes \\ \midrule  
 $\mathcal{Q}_{12}$  &  $\begin{pmatrix}
     0 & 0 & 1 \\
     0 & 1 & 0 \\ 1 & 0 & 0 \end{pmatrix}$ & $\begin{cases}
	\dot{x}=\lambda x^2\\
	\dot{y}=xy\\ 
	\dot{z}=-y^2-\lambda xz
\end{cases}$ & \begin{tabular}{l}
     $x=0$\\  \\ $y=0$      
\end{tabular} & Yes, if $\lambda\neq0$  \\ \bottomrule   
\end{tabular}
\end{adjustbox}\ \caption{Geodesic vector fields on $\h(\lambda)$ with invariant planes and idempotents.}\label{table:geodesic-fields}
\end{table}

\begin{remark}
In Table \ref{table:geodesic-fields}, we listed two invariant planes for each geodesic vector field. We note that, for $0<|\lambda|<1$, no further invariant planes exist. However, this is not the case for $\lambda =0$, where other invariant planes do exist. For instance, for $\mathcal{Q}_3$, infinitely many invariant planes exist, namely $z=c$, for every $c\in \R$. Another example is that of $\mathcal{Q}_7$, 
where $y=-x$ is an additional invariant plane. When the system of differential equations does not look tractable, the existence of invariant planes can be an important tool in the study of the vector field dynamics, by means of a linear change of variables. See Subsec. \ref{subsec-q5s} for an example. Notice, however, that a geodesic field may not have invariant planes, as is the case of \eqref{eq:e-mu-b-1}.
\end{remark}

\bigskip

\section{(In)completeness on $\mathfrak{aff}(\R)\oplus\R$: proof of Theorem \ref{thm:aff}}\label{sec:(in)co_aff}

The aim of this section is to present the classification of Lorentzian metrics on $\h(0)=\mathfrak{aff}(\R)\oplus\R$ in terms of geodesic (in)completeness and prove Theorem \ref{thm:aff}. 

Let $\z$ be the center and $\mathfrak{d}$ be the derived subalgebra of $\mathfrak{aff}(\R)\oplus \R$. In our standard basis, the non-zero bracket relation is given by $[e_1, e_2]=e_2$, and thus $\z=\spann\{e_3\}$ and $\mathfrak{d}=\spann\{e_2\}$.  We start by observing that both $\z$ and $\mathfrak{d}$ are invariant under the action of $\Aut(\mathfrak{aff}(\R)\oplus \R)$ and that the induced action on the space of Lorentzian metrics preserves their type (spacelike, timelike or lightlike) as well as whether they are orthogonal or not. 

Inspecting Table \ref{table:geodesic-fields}, we readily check that we have the following partition of Lorentzian normal forms on $\mathfrak{aff}(\R)\oplus \R$:

\begin{itemize}
    \item $\z$ is spacelike: $\mathcal{Q}_2$, $\mathcal{Q}_3$, $\mathcal{Q}_{4,r}$ {\small($0<r<1$)}, $\mathcal{Q}_{5,s}$ {\small ($s>0$)}, $\mathcal{Q}_{6,t}$, $\mathcal{Q}_7$, $\mathcal{Q}_{10}$, $\mathcal{Q}_{11}$;
    \item $\z$ is timelike: $\mathcal{Q}_1$, $\mathcal{Q}_{4,r}$ {\small ($r<0$)}, $\mathcal{Q}_{5,s}$ {\small ($-1<s<0$)}, $\mathcal{Q}_8$;
\item $\z$ is lightlike and
\begin{itemize}
    \item[--] $\z$ and $\mathfrak{d}$ are not orthogonal: $\mathcal{Q}_{4,0}$, $\mathcal{Q}_{5,0}$, $\mathcal{Q}_9$;
    \item [--] $\z$ and $\mathfrak{d}$ are orthogonal: $\mathcal{Q}_{12}$.
\end{itemize}
\end{itemize}

Let us denote by $\mathcal{F}_k$ the geodesic field associated to the metric $\mathcal{Q}_k$, for every subscript $k$ as in Section \ref{Sec:N-F}.

\subsection{Incomplete metrics}

\subsubsection{Incomplete geodesic fields with idempotents}

The Lorentzian metrics whose geodesic fields have idempotents were already noted in Table \ref{table:geodesic-fields}. Indeed, they are the only ones for which $\z$ is spacelike. Here, we present the list of idempotents of the geodesic field for each such metric.

\smallskip

 \begin{itemize}
\item[]      $\mathcal{F}_2$:  $(1,1,0), (1,-1,0)$; 
\item[]      $\mathcal{F}_3$:  $(1,1,0), (1,-1,0)$; 
\item[]      $\mathcal{F}_{4,r}$ \small{$(0<r<1)$}:   $\left( 1, \frac{-r}{\sqrt{r-r^2}}, \frac{1}{\sqrt{r-r^2}}\right)$;
\item[]      $\mathcal{F}_{5,s}$ \small{$(s>0)$}:   $\left( 1, \frac{-s}{\sqrt{s+s^2}}, \frac{1}{\sqrt{s+s^2}}\right)$; 
\item[]      $\mathcal{F}_{6,t}$:   $\left( 1, \frac{-t}{\sqrt{t^2-t}}, \frac{1}{\sqrt{t^2-t}}\right)$; 
\item[]      $\mathcal{F}_7$: $(1,1,-1), (1,-1,1)$;
\item[]  $\mathcal{F}_{10}$: $\left(1, -\frac{1}{2}, -\frac{1}{2}\right)$;    
\item[]  $\mathcal{F}_{11}$: $(1,0,0)$. 
\end{itemize}

\subsubsection{Incomplete geodesic fields with no idempotents}

The Lorentzian metrics for which $\z$ is lightlike and not orthogonal to $\mathfrak{d}$ are $\mathcal{Q}_{4,0}, \mathcal{Q}_{5,0}$ and $\mathcal{Q}_9$. They admit, cf. Table \ref{table:geodesic-fields}, the following geodesic fields.

$$
 \begin{array}{lll}
\mathcal{F}_{4,0}:\, \begin{cases}
	\dot{x}=-y^2-yz\\
	\dot{y}=0\\ 
	\dot{z}=xy+xz
\end{cases};
&  
\mathcal{F}_{5,0}:\,\begin{cases}
	\dot{x}=y^2-yz\\
	\dot{y}=0\\ 
	\dot{z}=-xy+xz
\end{cases};
&
\mathcal{F}_{9}:\,\begin{cases}
	\dot{x}=-yz\\
	\dot{y}=0\\ 
	\dot{z}=xz
\end{cases}.
\end{array}
$$

Their energy is a first integral and, moreover, $y$ (and so $y^2$) is clearly another first integral. Therefore, the linear combinations
$$\begin{array}{lclcl}
    \mathcal{I}_{4,0} & = & \tfrac{1}{2}(x^2+y^2+2yz)+\tfrac{1}{2}y^2  & = & \tfrac{1}{2}x^2+y^2+yz \smallskip \\
     \mathcal{I}_{5,0} & = &\tfrac{1}{2}(x^2-y^2+2yz)-\tfrac{1}{2}y^2 &= &\tfrac{1}{2}x^2-y^2+yz \smallskip\\
      \mathcal{I}_{9} & = &\tfrac{1}{2}(x^2+2yz) & = & \tfrac{1}{2}x^2+yz \\
\end{array}$$
are again first integrals of $\mathcal{F}_{4,0}$, $\mathcal{F}_{5,0}$ and $\mathcal{F}_9$, respectively. By restricting the geodesic equations to the corresponding zero level sets of $\mathcal{I}_{4,0}$, $\mathcal{I}_{5,0}$ and $\mathcal{I}_9$, we obtain the differential equation $\dot{x}=\tfrac{1}{2}x^2$ whose non-trivial solutions have maximal domain of definition strictly contained in $\R$. Thus, $\mathcal{F}_{4,0}$, $\mathcal{F}_{5,0}$ and $\mathcal{F}_9$
 are incomplete vector fields.

\subsection{Complete metrics}

\subsubsection{Complete geodesic fields whose integral curves are  all bounded}  
The Lorentzian metrics for which $\z$ is timelike are $\mathcal{Q}_{1}$, $\mathcal{Q}_{4,r}$ {\small $(r<0)$}, $\mathcal{Q}_{5,s}$ {\small $(-1<s<0)$} and $\mathcal{Q}_8$. Their geodesic fields are given by
$$
\begin{array}{ll}
\mathcal{F}_1: \begin{cases}
	\dot{x}=-y^2\\
	\dot{y}=xy\\ 
	\dot{z}=0
\end{cases}
\qquad &
\mathcal{F}_{4,r}: \begin{cases}
    \dot{x}=-y^2-yz\\
	\dot{y}=\frac{rx}{r-1}(y+z)\\ 
	\dot{z}=\frac{-x}{r-1}(y+z)
\end{cases}
\medskip \\
\mathcal{F}_{5,s}: \begin{cases}
	\dot{x}=y^2-yz\\
	\dot{y}=\frac{sx}{s+1}(y-z)\\ 
	\dot{z}=\frac{x}{s+1}(z-y)
\end{cases}
& 
\mathcal{F}_8:\begin{cases}
	\dot{x}=-yz\\
	\dot{y}=xz\\ 
	\dot{z}=xz
\end{cases}
\end{array}
$$

cf. Table \ref{table:geodesic-fields}. Besides the energy, they have the following polynomial (in fact, linear) first integrals, $\mathcal{J}_{1}=z$, $\mathcal{J}_{4,r}=y+rz$, $\mathcal{J}_{5,s} = y+sz$, $\mathcal{J}_8=y-z$, respectively. We obtain the following quadratic first integrals
\small
$$\begin{array}{lclcl}
    \mathcal{I}_1 & = & (x^2+y^2-z^2)+2z^2 & = & x^2+y^2+z^2 \smallskip \\ \mathcal{I}_{4,r} & = & (x^2+y^2+rz^2+2yz)-\frac{r-2}{r(r-1)}(y+rz)^2 & = &x^2+\frac{1}{r(r-1)} ((r-1)^2y^2+(y+rz)^2) \smallskip \\
    \mathcal{I}_{5,s} & = & (x^2-y^2+sz^2+2yz)-\frac{s+2}{s(s+1)}(y+sz)^2 & = &x^2-\frac{1}{s(s+1)} ((s+1)^2y^2+(y+sz)^2) \smallskip \\
    \mathcal{I}_8 & = & (x^2+2yz-z^2)+2(y-z)^2 & = & x^2 + y^2 + (y-z)^2
\end{array}$$ \normalsize
and observe that they are all positive-definite. Thus, the geodesic fields $\mathcal{F}_1$, $\mathcal{F}_{4,r}$ {\small $(r<0)$}, $\mathcal{F}_{5,s}$ {\small $(-1<s<0)$}, $\mathcal{F}_8$ have all their integral curves bounded and are, therefore, complete.

\begin{remark} It is well known  that the existence of a timelike Killing field on a Lorentzian manifold with some boundedness conditions implies geodesic completeness, \cite[Prop. 2.1]{RomeroSanchez}. In our case, 
 the Lie algebra vector $e_3$ is central and this implies that the corresponding left-invariant vector field $E_3$ is also right-invariant. In particular, $E_3$ is a Killing field which has constant energy. When $E_3$ is timelike, completeness of our geodesic fields then follows immediately. Moreover, it is worth noting that our linear first integrals were expected. The fact that $E_3$ is Killing implies that every geodesic field has a linear first integral given by $q(e_3, -)$, where $q$ is the metric in question.           
\end{remark}

\subsubsection{Complete geodesic fields whose non-stationary integral curves are all unbounded}
The remaining Lorentzian metric in our discussion is the only case where $\z$ is lightlike and orthogonal to $\mathfrak{d}$, that is, $\mathcal{Q}_{12}$. Its geodesic field is given by
$$\mathcal{F}_{12}: \begin{cases}
    \dot{x}=0\\
    \dot{y}=xy\\
    \dot{z}=-y^2
\end{cases}$$
and we readily see that its flow can be computed explicitly. Concretely, given the initial condition $(x_o, y_o, z_o)$, the solution $\alpha$ can be written as 
$$\begin{array}{lcl}
\alpha(t) & = & (0, y_o, -y_o^2t+z_0),\, t\in \R, \,  \mbox{if } x_o=0,\\ 
\alpha(t) & = & \left(x_o, y_o \mathrm{e}^{x_o t}, \tfrac{y_o^2}{2 x_o} (1-\mathrm{e}^{2 x_o t})+z_o\right), \, t\in\R, \, \mbox{if } x_o\neq 0.
\end{array}$$
Thus, $\mathcal{F}_{12}$ is a complete geodesic field and its non-stationary integral curves are all unbounded. 

\medskip

    The proof of Theorem \ref{thm:aff} now follows from the discussion in this section.

\section{(In)completeness on $\mathfrak{h}(\lambda)$, $0<|\lambda|<1$: proof of Theorem \ref{thm:h(lambda)}} \label{sec:(in)co_h_lambda}
In this section, we classify Lorentzian metrics on the Lie algebras $\h(\lambda)$, $0<|\lambda|<1$, with respect to their geodesic (in)completeness and give the proof of Theorem \ref{thm:h(lambda)}. Our strategy here will be similar to that of Sec. \ref{sec:(in)co_aff}.

Inspecting Table \ref{table:geodesic-fields}  once more, we see that the family $\mathcal{Q}_{5,s}$, with {\small $-1<s<0$}, corresponds to the metric normal forms for which both $e_2$ and $e_3$ are timelike. On the other hand, $\mathcal{Q}_8$ is the only one for which $e_3$ is timelike and $e_2$ is lightlike.
We start by showing that every other metric normal form is incomplete and proceed with the analysis of the completeness of $\mathcal{Q}_{5,s}$, {\small $-1<s<0$}, and $\mathcal{Q}_8$.

\subsection{Incomplete metrics}

\subsubsection{Incomplete geodesic fields with idempotents}
For each geodesic field that admits idempotents, we present their full list. 

\begin{itemize}
\item[] $\mathcal{F}_1$:  $\left(\frac{1}{\lambda}, 0, \frac{1}{\lambda} \right)$, $\left(\frac{1}{\lambda}, 0, \frac{-1}{\lambda}\right)$;
 \item[]      $\mathcal{F}_2$:  $(1,1,0), (1,-1,0)$; 
\item[]      $\mathcal{F}_3$:  $(1,1,0), (1,-1,0)$, $\left(\frac{1}{\lambda}, 0, \frac{1}{\lambda} \right)$, $\left(\frac{1}{\lambda}, 0, \frac{-1}{\lambda}\right)$; 
\item[]      $\mathcal{F}_{4,r}$ \small{$(0<r<1)$}:   $\left( 1, \frac{-r}{\sqrt{r-r^2}}, \frac{1}{\sqrt{r-r^2}}\right)$, $\left( \frac{1}{\lambda}, \frac{-1}{\lambda\sqrt{1-r}}, \frac{1}{\lambda \sqrt{1-r}}\right)$;
\item[]      $\mathcal{F}_{4,r}$ \small{$(r\leq 0)$}:   $\left( \frac{1}{\lambda}, \frac{-1}{\lambda\sqrt{1-r}}, \frac{1}{\lambda \sqrt{1-r}}\right)$;  
\item[]      $\mathcal{F}_{5,s}$ \small{$(s>0)$}:   $\left( 1, \frac{-s}{\sqrt{s+s^2}}, \frac{1}{\sqrt{s+s^2}}\right)$; 
\item[]      $\mathcal{F}_{6,t}$:   $\left( 1, \frac{-t}{\sqrt{t^2-t}}, \frac{1}{\sqrt{t^2-t}}\right)$, $\left(\frac{1}{\lambda}, \frac{-1}{\lambda\sqrt{t-1}}, \frac{1}{\lambda\sqrt{t-1}} \right)$; 
\item[]      $\mathcal{F}_7$: $(1,1,-1), (1,-1,1)$;
\item[]  $\mathcal{F}_{10}$: $\left(1, -\frac{1}{2}, -\frac{1}{2}\right)$, $\left(\frac{1}{\lambda}, 0,0\right)$;    
\item[]  $\mathcal{F}_{11}$: $(1,0,0)$;
\item[] $\mathcal{F}_{12}$:  $\left(\frac{1}{\lambda}, 0,0\right)$.
\end{itemize}

\subsubsection{Incomplete geodesic fields with no idempotents}
We consider here the metrics $\mathcal{Q}_{5,0}$ and $\mathcal{Q}_9$. Starting with $\mathcal{Q}_9$, the restriction of the geodesic field to the zero level set of the energy $x^2+2yz$ gives the equation $\dot{x}=\frac{\lambda+1}{2}x^2$. Hence, $\mathcal{F}_9$ is incomplete. For $\mathcal{Q}_{5,0}$, the restriction of the geodesic field to the zero level set of the energy $e(x,y,z)=x^2-y^2+2yz$ yields the equation
$$\dot{x}=\frac{\lambda+1}{2}x^2+\frac{1-\lambda}{2}y^2,$$ which, in turn, implies the existence of incomplete integral curves of $\mathcal{F}_{5,0}$ by Lem. \ref{lem:incompleteness}, since $\frac{1-\lambda}{2}y^2$ is nonnegative for $0<|\lambda|<1$. Thus, $\mathcal{F}_{5,0}$ is incomplete. 
\subsection{Complete metrics}

\subsubsection{Complete geodesic fields whose integral curves are not all bounded}\label{subsec-q8} 
Let us take the Lorentzian metric $\mathcal{Q}_8$ and recall that its geodesic field $\mathcal{F}_8$ is given by
\begin{equation*}
    \mathcal{F}_8: \begin{cases}
        \dot{x} = \lambda z^2-(\lambda+1)yz \\
        \dot{y} = \lambda x y + (1-\lambda) x z \\
        \dot{z} = x z
    \end{cases}.
\end{equation*}
The plane $\{z=0 \}$ is invariant under the geodesic flow and the restriction to this plane yields a complete vector field with unbounded integral curves. Precisely, for an initial condition $(x_o, y_o, 0)$, the integral curve is $\alpha(t)=(x_o, y_o\mathrm{e}^{\lambda x_o t}$, 0), $t\in \R$.

Since $\{z=0\}$ is invariant, then so are the half-spaces $H^+=\{z>0\}$ and $H^-=\{z<0\}$. It can be easily checked that, besides the energy, no other quadratic first integrals exist. However, we have the following. 

\begin{lemma} 
    The map $f: H^+ \longrightarrow \R$, $(x,y,z) \longmapsto f(x,y,z)=\dfrac{y-z}{z^\lambda} = (y-z)\mathrm{e}^{-\lambda\ln (z)}$ is an invariant of the geodesic field $\mathcal{F}_8$. 
\end{lemma}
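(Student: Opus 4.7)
The plan is to verify directly that $f$ is a first integral by computing its Lie derivative along $\mathcal{F}_8$ and showing it vanishes identically on $H^+$. Since $z>0$ on $H^+$, the function $f(x,y,z)=(y-z)z^{-\lambda}$ is smooth there, and the chain rule gives
$$\dot f \;=\; (\dot y - \dot z)\, z^{-\lambda} \;-\; \lambda (y-z)\, z^{-\lambda-1}\, \dot z,$$
so it suffices to show that the two terms on the right-hand side cancel on solutions of $\mathcal{F}_8$.

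The key algebraic step is to notice that the geodesic field has been tailored so that, when one subtracts the $\dot z$ equation from the $\dot y$ equation, the term without $\lambda$ cancels:
$$\dot y - \dot z \;=\; \lambda x y + (1-\lambda)xz - xz \;=\; \lambda x(y-z).$$
Using $\dot z = xz$ in the second piece of $\dot f$ then yields $-\lambda(y-z)z^{-\lambda-1}\cdot xz = -\lambda x(y-z)z^{-\lambda}$, which exactly cancels $(\dot y - \dot z)z^{-\lambda} = \lambda x(y-z)z^{-\lambda}$. Hence $\dot f \equiv 0$ on $H^+$, and $f$ is constant along every integral curve contained in $H^+$.

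There is essentially no obstacle: the whole proof is a one-line verification once one writes $f$ in the form $(y-z)\mathrm{e}^{-\lambda \ln z}$. The only conceptual point worth flagging in the write-up is the reason this ansatz arises naturally. On the invariant plane $\{y=z\}$ the difference $y-z$ vanishes, and in general the evolution of $y-z$ is purely multiplicative, $\tfrac{d}{dt}(y-z) = \lambda x(y-z)$, while $\tfrac{d}{dt}\ln z = x$; dividing the first by $y-z$ and the second by $\lambda$ gives the same $x\,dt$, so $\ln(y-z) - \lambda \ln z$ is conserved wherever it makes sense, which is precisely $f$ up to exponentiation. This also explains why an analogous invariant holds on $H^-$ (replacing $z^\lambda$ by $|z|^\lambda$), and will be exploited in the next step of Subsec.~\ref{subsec-q8} to control the dynamics off the plane $\{z=0\}$.
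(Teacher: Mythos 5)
Your computation is correct and coincides with the paper's own proof, which is the same direct verification that $\dot y-\dot z=\lambda x(y-z)$ and $\dot z=xz$ force the two terms of $\tfrac{d}{dt}\bigl((y-z)z^{-\lambda}\bigr)$ to cancel (the paper writes it via the quotient rule rather than the product rule, which is immaterial). Your added remark explaining where the ansatz comes from, namely that $\ln(y-z)-\lambda\ln z$ has zero derivative along the flow, is a nice motivation but does not change the argument.
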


\begin{proof}
    Let $\gamma(t)=(x(t), y(t), z(t))$, with $z(t)>0$, be an integral curve of $\mathcal{F}_8$. Then
    $$\frac{d}{dt}f(\gamma(t)) = \frac{(\dot{y}(t)-\dot{z}(t))z(t)^\lambda - (y(t)-z(t))\lambda \dot{z}(t)z(t)^{\lambda-1}}{z(t)^{2\lambda}}=0.$$
Thus, $f$ is a first integral of $\mathcal{F}_8$ restricted to $H^+$.     
\end{proof}
Let $k,c \in \R$, and denote by $\mathcal{S}_{k,c}$ the following subset of $\R^3$
$$\mathcal{S}_{k,c}=\left\{(x,y,z)\in H^+:\,  x^2-z^2+2yz =k, \, \frac{y-z}{z^\lambda}=c \right\}.$$ We will show that $\mathcal{S}_{k,c}$ is bounded for every $k$ and $c$. We have that 
$$y=z+cz^\lambda \quad \mbox{ and } \quad x^2=k-z^2(1+2cz^{\lambda-1}) $$
and this immediately implies that $x$ and $y$ are bounded if $z$ is. The second equation implies that $z^2(1+2cz^{\lambda-1})\leq k$. Thus, $z$ cannot go to infinity (as $\lambda-1<0$) and is, therefore, bounded. 
We have now shown that every integral curve with initial condition $(x_o, y_o, z_o)$ such that $z_0>0$ is contained in a compact part of $H^+$ and is thus complete. Now, since $\mathcal{F}_8$ is a quadratic homogeneous vector field, the integral curves on $H^-$ are in one-to-one correspondence with the integral curves of $H^+$  via $\alpha(t)\longmapsto -\alpha(-t)$. 
This concludes the proof that $\mathcal{F}_8$ is a complete vector field.

\subsubsection{Complete geodesic fields whose integral curves are all bounded}\label{subsec-q5s}  The remaining Lorentzian metrics in our analysis are the ones in the family $\mathcal{Q}_{5,s}$, \small{$-1<s<0$}. As seen from Table \ref{table:geodesic-fields}, the geodesic field $\mathcal{F}_{5,s}$ looks somewhat complicated. However, the existence of the two invariant planes $\{y=z\}$ and $\{y=-sz\}$ hints at the following linear change of variables 
$$\xi=sz+y \quad \mbox{ and } \quad  \eta = z-y .$$
In the new coordinates $(x,\xi, \eta)$ the vector field $\mathcal{F}_{5,s}$ is written as
$$\mathcal{F}_{5,s}: \,\ \begin{cases}
	\dot{x}=\frac{1}{s+1}(-\lambda\xi^2+s\eta^2-(\lambda+1)\xi\eta)\\
	\dot{\xi}=\lambda x\xi\\ 
	\dot{\eta}=x\eta
\end{cases}$$
and, indeed, it becomes simpler. The energy is given by $e_s(x,\xi, \eta)= x^2+\frac{1}{s+1}(\xi^2+2\xi\eta-s\eta^2)$. The invariant plane $\{\eta=0\}$ is spacelike and all integral curves with initial condition $(x_o, \xi_o, \eta_o)$ with $\eta_o=0$ are bounded, and hence complete. The half-spaces $H^+=\{\eta>0\}$ and $H^-=\{\eta<0\}$ are also invariant. Moreover, we have the following partially defined first integral.

\begin{lemma} 
    The map $f: H^+ \longrightarrow \R$, $(x,\xi,\eta) \longmapsto f(x,\xi,\eta)=\dfrac{\xi}{\eta^\lambda} = \xi\,\mathrm{e}^{-\lambda\ln (\eta)}$ is an invariant of the geodesic field $\mathcal{F}_{5,s}$. 
\end{lemma}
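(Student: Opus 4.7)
The plan is to mirror the short proof of the preceding lemma for $\mathcal{F}_{8}$. First I would observe that the half-space $H^{+}$ is invariant under $\mathcal{F}_{5,s}$: since $\dot{\eta}=x\eta$, any integral curve satisfies
\begin{equation*}
\eta(t)=\eta(0)\exp\!\left(\int_{0}^{t}x(\tau)\,d\tau\right),
\end{equation*}
so the sign of $\eta$ is preserved and $f$ is well defined along integral curves starting in $H^{+}$.

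Next, for an integral curve $\gamma(t)=(x(t),\xi(t),\eta(t))$ of $\mathcal{F}_{5,s}$ in $H^{+}$, I would differentiate $f(\gamma(t))=\xi(t)\eta(t)^{-\lambda}$ using the quotient rule and substitute the ODEs $\dot{\xi}=\lambda x\xi$ and $\dot{\eta}=x\eta$ directly. The numerator of $\tfrac{d}{dt}f(\gamma(t))$ becomes
\begin{equation*}
\dot{\xi}\,\eta^{\lambda}-\lambda\xi\,\eta^{\lambda-1}\dot{\eta}=\lambda x\xi\eta^{\lambda}-\lambda\xi\eta^{\lambda-1}\cdot x\eta=0,
\end{equation*}
so $f$ is constant along $\gamma$, as required.

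Structurally, the reason this works is that the two invariant planes $\{\xi=0\}$ and $\{\eta=0\}$ (coming from $\{y=-sz\}$ and $\{y=z\}$) turn the $\xi$ and $\eta$ components of $\mathcal{F}_{5,s}$ into linear ODEs whose coefficients are proportional multiples of $x$, with proportionality factor exactly $\lambda$. Equivalently, along any integral curve in $H^{+}$ one has $(\ln|\xi|)^{\cdot}=\lambda\,(\ln\eta)^{\cdot}$, which is another way of seeing that $\xi/\eta^{\lambda}$ is conserved. I do not expect any real obstacle in carrying this out; the only subtlety is the restriction to $H^{+}$, which is needed because $\eta^{\lambda}$ is not smoothly defined for $\eta\le 0$ when $\lambda$ is not an integer (the symmetric statement on $H^{-}$ is then obtained either by repeating the computation with $|\eta|^{\lambda}$ or by invoking the quadratic homogeneity $(x,\xi,\eta)\mapsto -(x,\xi,\eta)$ used in Subsec.~\ref{subsec-q8}).
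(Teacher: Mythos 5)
Your computation is correct and is exactly the argument the paper uses (it proves the analogous lemma for $\mathcal{F}_8$ by the same quotient-rule differentiation and leaves this one as an evident repetition): substituting $\dot{\xi}=\lambda x\xi$ and $\dot{\eta}=x\eta$ makes the numerator $\lambda x\xi\eta^{\lambda}-\lambda\xi\eta^{\lambda-1}\cdot x\eta$ vanish identically. Your added observations --- that $H^{+}$ is flow-invariant so $f$ is well defined along trajectories, and that $H^{-}$ is handled by the homogeneity symmetry --- are consistent with what the paper does elsewhere in the same subsection.
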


Let $k,c \in \R$, and denote by $\mathcal{S}_{k,c}$ the following subset of $\R^3$
$$\mathcal{S}_{k,c}=\left\{(x,y,z)\in H^+:\, x^2+\frac{1}{s+1}(\xi^2+2\xi\eta-s\eta^2)=k, \, \frac{\xi}{\eta^\lambda}=c \right\}.$$ We will show that $\mathcal{S}_{k,c}$ is bounded for every $k$ and $c$. We have that 
$$\xi=c\,\eta^\lambda \quad \mbox{ and } \quad x^2=k-\frac{1}{s+1}(c^2\eta^{2\lambda}+2c\,\eta^{\lambda+1}-s\,\eta^2) $$
and this immediately implies that $x$ and $\xi$ are bounded if $\eta$ is. The second equation implies that $$-\frac{s}{s+1}\eta^2\left(1-\frac{c^2}{s}\eta^{2(\lambda-1)}-\frac{2c}{s}\eta^{\lambda-1}\right)\leq k.$$ 
It is important to observe here that not only $\lambda-1<0$ but also that $-\frac{s}{s+1}>0$ since $-1<s<0$. 

Thus, $\eta$ cannot go to infinity and is, therefore, bounded. The rest of the argument to prove that $\mathcal{F}_{5,s}$ is a complete vector field is now analogous to that of Subsec. \ref{subsec-q8}.

\medskip

This concludes the proof of Theorem \ref{thm:h(lambda)}.

\bigskip

\section{The limiting Lie algebras $\h(\lambda)$, with $\lambda=\pm 1$}\label{sec:limiting}

The geodesic fields of Table \ref{table:geodesic-fields} in Sec. \ref{Sec:geod-vf}  clearly extend to the limiting cases $\lambda = \pm 1$ (although care should be taken concerning the existence of idempotents). Together with the metric normal forms of Rem. \ref{Rem:limiting}, we can quickly solve the completeness problem for $\h(1)$ and $\h(-1)$, recovering the result of \cite{Guediri-solvable} in dimension 3 and correcting the statement in \cite[Prop. 3]{BrombergMedina}.

\subsection{Case $\lambda =1 $} The Lorentzian normal forms are $\mathcal{Q}_1, \mathcal{Q}_3$, and $\mathcal{Q}_{11}$, and they are all incomplete with idempotents.

\subsection{Case $\lambda =-1$} From Table \ref{table:geodesic-fields}, we readily check that all metric normal forms given in Rem. \ref{Rem:limiting} are incomplete with idempotents except for $\mathcal{Q}_{5,s}$ {\small ($-1<s \leq 0$)} and $\mathcal{Q}_9$. We now show that these remaining metrics are complete. 
The geodesic field associated to $\mathcal{Q}_9$ is given by 
   \begin{equation*}   
  \mathcal{F}_9: \,  \begin{cases}
	\dot{x}=0\\
	\dot{y}=-xy\\ 
	\dot{z}= xz
\end{cases}.
   \end{equation*}
The integral curve of $\mathcal{F}_9$ with initial condition $(x_o, y_o, z_o)$ is, thus, $\alpha(t)=(x_o, y_o\mathrm{e}^{-x_o t}, z_o\mathrm{e}^{x_o t})$, $t\in\R$. If $y_o\neq 0$ or $z_o\neq 0$, then $\alpha(t)$ is unbounded. 

As for the family $\mathcal{Q}_{5,s}$ {\small ($-1<s \leq 0$)}, we can easily check that, besides the energy $e(x,y,z)=x^2-y^2+2yz+sz^2$, the first integral of Subsec. \ref{subsec-q5s} extends to $\lambda=-1$ and yields, in fact, the quadratic first integral $f(x,y,z) = y^2+(s-1)yz-sz^2$. Taking $e(x,y,z)+2f(x,y,z) = x^2+(y+sz)^2-(s^2+s)z^2$, we obtain a positive-definite quadratic first integral, for all $-1<s<0$. So, all integral curves of $\mathcal{F}_{5,s}$ {\small ($-1<s<0$)} are bounded. For $s=0$, we have the geodesic field
   \begin{equation*}   
  \mathcal{F}_{5,0}: \,  \begin{cases}
	\dot{x}=y^2\\
	\dot{y}=-xy\\ 
	\dot{z}= x(z-2y)
\end{cases}
  \end{equation*}
The plane $\{y=0\}$ is invariant with integral curves $\alpha(t)=(x_o, 0, z_o \mathrm{e}^{x_o t})$, $t\in \R$.  We can assume that $y\neq 0$. The solutions for $x$ and $y$ do not depend on $z$ and, moreover, $x^2+y^2 =c$, for some $c>0$. Thus $x(t)$ and $y(t)$ are defined for all $t\in \R$. Since $y^2-yz = k$, for some $k\in \R$, then $z(t)=y(t)-\frac{k}{y(t)}$ and thus $z(t)$ is also defined for all $t\in \R$. Hence, $\mathcal{F}_{5,0}$ is complete.  

In summary, indeed a Lorentzian metric is incomplete if and only if it has an idempotent. However, not all complete metrics admit a positive-definite quadratic first integral, as is the case of $\mathcal{Q}_9$ and $\mathcal{Q}_{5,0}$.

\bigskip

\section{Incompleteness of 3-dimensional non-unimodular Lie algebras}\label{sec:3D-non-uni}

Recall that, according to the Bianchi classification \cite{Bianchi}, the list of non-unimodular 3-dimensional Lie algebras is given as follows.
$$\begin{array}{ll}
    \mathfrak{psh}: &  [e_1,e_2]=e_2,\, [e_1, e_3] = e_2+e_3;  \\
     \mathfrak{h}(\lambda), \, {\small -1<\lambda\leq 1}: & [e_1, e_2] = e_2,\, [e_1, e_3] = \lambda e_3;\\
     \mathfrak{e}(\mu), {\small \mu>0}: & [e_1, e_2]= \mu e_2+ e_3,\, [e_1, e_3] =\mu e_3-e_2. 
\end{array}$$

In this section, we wish to finalize the proof that every non-unimodular 3-dimensional Lie algebra admits an incomplete Lorentzian metric. For the Lie algebra $\h(1)$, this was done in \cite{Guediri-solvable}; for $\mathfrak{psh}$ in \cite{CFZ-partI}; and for $\h(\lambda)$, $|\lambda|<1$, in the present paper. It thus remains to show that, for every $\mu> 0$, the Lie algebra $\mathfrak{e}(\mu)$ admits at least one incomplete metric.

\begin{proposition}\label{prop:e-mu}
    Every Lie algebra $\mathfrak{e}(\mu)$, with $\mu>0$, admits an incomplete metric.
\end{proposition}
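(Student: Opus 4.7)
The plan is to exhibit an explicit Lorentzian metric on $\mathfrak{e}(\mu)$ whose associated geodesic field has an integral curve that blows up in finite time, using Lem.~\ref{lem:incompleteness} as the key tool. The candidate I would try first is the diagonal metric $q$ in the basis $\{e_1,e_2,e_3\}$ with $q(e_1,e_1)=-1$ and $q(e_2,e_2)=q(e_3,e_3)=1$, so that $e_1$ is timelike and the $(e_2,e_3)$-plane is spacelike. The reason to expect this choice to work is that the brackets $[e_1,e_2]=\mu e_2+e_3$ and $[e_1,e_3]=\mu e_3-e_2$ exhibit an $\SO(2)$-symmetry in the $(e_2,e_3)$-plane that is compatible with $q$, so the combination $y^2+z^2$ should appear naturally in the geodesic equations.

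The first step is to compute $\mathrm{ad}^\dagger_v v$ for $v=xe_1+ye_2+ze_3$ using the defining relation $q(\mathrm{ad}^\dagger_v v, w)=q(v,[v,w])$. I expect the geodesic system to take the form
\begin{equation*}
    \dot{x}=\mu(y^2+z^2),\qquad \dot{y}=\mu xy+xz,\qquad \dot{z}=-xy+\mu xz,
\end{equation*}
whose conserved energy is $q(v,v)=-x^2+y^2+z^2$. Then, along any trajectory with initial energy $e\geq 0$, the identity $y^2+z^2=x^2+e$ reduces the first equation to the scalar ODE $\dot{x}=\mu x^2+\mu e$. Fixing an initial condition with $e>0$, for instance $(x_0,y_0,z_0)=(0,1,0)$ which yields $e=1$, the hypotheses $a=\mu>0$ and $\alpha\equiv\mu e>0$ of Lem.~\ref{lem:incompleteness} are satisfied, so the corresponding maximal integral curve of the geodesic field must be incomplete.

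The only real obstacle is getting the $\mathrm{ad}^\dagger$ computation right with the twisted brackets of $\mathfrak{e}(\mu)$; once the system above is in hand, the energy constraint and Lem.~\ref{lem:incompleteness} finish the argument in one line. I note that, should a self-contained verification be preferred over invoking the lemma, one can integrate the scalar ODE $\dot{x}=\mu x^2+\mu e$ with $x(0)=0$ explicitly to obtain $x(t)=\sqrt{e}\,\tan(\mu\sqrt{e}\,t)$, which blows up at $t=\pi/(2\mu\sqrt{e})$ and exhibits a concrete incomplete geodesic for every $\mu>0$.
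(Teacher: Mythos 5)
Your proposal is correct and follows essentially the same route as the paper: the same metric $q=\mathrm{diag}(-1,1,1)$, the same geodesic system \eqref{eq:e-mu-b-1}, and the same reduction via the energy first integral to a scalar Riccati equation in $x$. The only (immaterial) difference is that the paper restricts to the null level set $e=0$, obtaining $\dot{x}=\mu x^2$ directly, whereas you work on a level set with $e>0$ and invoke Lem.~\ref{lem:incompleteness}.
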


\begin{proof}
       Let us take the metric $q=\mathrm{diag}(-1,1,1)$ given in a basis $\{e_1, e_2, e_3\}$ as above,  whose energy is $e(x,y,z)=-x^2+y^2+z^2$ and the geodesic field is given by
    \begin{equation}   
  \label{eq:e-mu-b-1}
     \begin{cases}
	\dot{x}=\mu(y^2+z^2)\\
	\dot{y}=\mu xy+xz\\ 
	\dot{z}=-xy+\mu xz
\end{cases}.
   \end{equation}
We obtain the incomplete equation $\dot{x}=\mu x^2$ for null integral curves. Hence, $q$ is incomplete.
\end{proof}

We have thus proven the following (Thm. \ref{Thm:Intro-Ic-3D}).

\begin{theorem}\label{Thm: Ic-3D}
    Every non-unimodular Lie algebra of dimension 3 admits an incomplete metric.
\end{theorem}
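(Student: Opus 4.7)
The plan is to reduce the theorem to its single outstanding case and handle that case by exhibiting an explicit metric whose geodesic field admits a visibly incomplete invariant subsystem. Starting from the Bianchi list of non-unimodular 3-dimensional Lie algebras recalled at the beginning of Sec. \ref{sec:3D-non-uni}, I would first check off the cases already resolved: $\mathfrak{psh}$ by \cite{CFZ-partI}, $\h(1)$ by \cite{Guediri-solvable}, and $\h(\lambda)$ for $|\lambda|<1$ by Theorems \ref{thm:aff} and \ref{thm:h(lambda)}. What remains is to produce one incomplete Lorentzian metric on each $\mathfrak{e}(\mu)$, $\mu>0$, which is precisely Proposition \ref{prop:e-mu}.

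For $\mathfrak{e}(\mu)$, I would make the simplest possible choice: the metric $q$ for which the basis $\{e_1,e_2,e_3\}$ satisfying $[e_1,e_2]=\mu e_2+e_3$ and $[e_1,e_3]=\mu e_3-e_2$ is pseudo-orthonormal with $e_1$ timelike, i.e.\ $q=\mathrm{diag}(-1,1,1)$. First I would compute $\mathrm{ad}^\dagger_v v$ for $v=x e_1+y e_2+z e_3$. Because the structure constants of $\mathfrak{e}(\mu)$ are built from a rotation matrix plus a homothety, the resulting Euler--Arnold system is the one displayed in \eqref{eq:e-mu-b-1}, and the calculation is just a matter of writing out the coadjoint in the chosen basis.

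The main idea is then the following simple observation. The energy $e(x,y,z)=-x^2+y^2+z^2$ is a first integral, so the null cone $\{-x^2+y^2+z^2=0\}$ is invariant under the geodesic flow. On this invariant set the first component of \eqref{eq:e-mu-b-1} reads
\[
\dot x = \mu(y^2+z^2)=\mu x^2,
\]
a Riccati equation whose nonzero solutions blow up in finite time. Any integral curve of the geodesic field with nonzero initial energy-zero condition is therefore incomplete, and $q$ is an incomplete Lorentzian metric on $\mathfrak{e}(\mu)$. Combining with the list above yields Theorem \ref{Thm: Ic-3D}.

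I do not foresee a real obstacle: the only slightly delicate step is verifying the geodesic field \eqref{eq:e-mu-b-1} (a mechanical computation), and the incompleteness argument avoids both the search for idempotents and any appeal to Lemma \ref{lem:incompleteness}, since the null-cone restriction is itself the model incomplete ODE $\dot x=\mu x^2$. Notably, one could equally invoke Lemma \ref{lem:incompleteness} with $\alpha\equiv 0$ along any geodesic whose projection to the $(y,z)$-coordinates is not identically zero, but the null-cone reduction is the most economical presentation.
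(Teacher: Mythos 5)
Your proposal is correct and follows essentially the same route as the paper: the same reduction to the outstanding case $\mathfrak{e}(\mu)$, the same choice of metric $q=\mathrm{diag}(-1,1,1)$, and the same restriction to the null cone yielding the incomplete equation $\dot x=\mu x^2$. No gaps.
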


We end this article with the following question: 
{\it Does every non-unimodular Lie algebra admit an incomplete metric or is this a particular phenomenon of dimension 3?}

\bigskip

\section*{Acknowledgments}  
{\small

\begingroup
\sloppy

The authors  acknowledge the support of CMAT (Centro de Matem\'atica da Universidade do Minho). Their research was financed by Portuguese Funds through FCT (Fundação para a Ciência e a Tecnologia, I.P.) within the projects UIDB/00013/2020, UIDP/00013/2020, UID/00013/2025 and also within the doctoral grant UI/BD/154255/2022 of the first named author.  

\endgroup
}

\bigskip

\bibliographystyle{alpha}
\bibliography{bibliography}

\bigskip

\end{document}